\begin{document}

\newcommand{\Sp}{\mathcal{S}}
\newcommand{\C}{\mathcal{C}}
\newcommand{\R}{\mathcal{R}}

\newtheorem{thm}{Theorem}
\newtheorem{pro}[thm]{Proposition}
\newtheorem{lem}[thm]{Lemma}
\newtheorem{cor}[thm]{Corollary}

\theoremstyle{definition}
\newtheorem{dfn}[thm]{Definition}
\newtheorem{exa}[thm]{Example}
\newtheorem{rem}[thm]{Remark}

\newcommand{\tr}{\intercal}

\title{Analysis of Mass-Action Systems by Split Network Translation}

\author{Matthew D. Johnston\\ \\ Lawrence Technological University \\ 21000 W 10 Mile Rd \\ Southfield, MI 48075 \\ \tt{mjohnsto1@ltu.edu}}

\date{}

\maketitle

\begin{abstract}
We introduce the notion of corresponding a chemical reaction network to a \emph{split network translation}, and use this novel process to extend the scope of existing network-based theory for characterizing the steady state set of mass-action systems. In the process of network splitting, the reactions of a network are divided into subnetworks, called slices, in such a way that, when summed across the slices, the stoichiometry of each reaction sums to that of the original network. This can produce a network with more desirable structural properties, such as weak reversibility and a lower deficiency, which can then be used to establish steady state properties of the original mass-action system such as multistationarity and absolute concentration robustness. We also present a computational implementation utilizing mixed-integer linear programming for determining whether a given chemical reaction network has a weakly reversible split network translation.
\end{abstract}

\section{Introduction}
\label{sec:intro}

A chemical reaction network consists of chemical species which interact through reactions to form new chemical species. Under reasonable physical assumptions, such as well-mixing of the chemicals and sufficient molecular counts, it is reasonable to model the dynamics of such systems with mass-action kinetics resulting in a system of nonlinear polynomial ordinary differential equations. Mass-action systems are a common modeling framework for industrial processes \cite{E-T,Sharma2000} and systems biology \cite{Ingalls,Alon2007}.

In general, characterizing the steady states of mass-action systems is made challenging by the high-dimensionality, significant nonlinearities, and parameter uncertainly inherent in realistic biochemical reaction systems, such as signal transduction cascades and gene regulatory networks. Recent mathematical research has focused on developing computationally-tractable network-based methods for characterizing properties of the steady states of mass-action systems, such as the capacity for multistationarity \cite{M-D-S-C,MR2012,C-F1,C-F2,C-F-M-W2016} and absolute concentration robustness \cite{Sh-F,A-E-J,Tonello2017}, and developing methods for constructing parametrizations of the steady state set \cite{MR2014,M-D-S-C,C-D-S-S,J-M-P,J-B2018,D-M2018}. 

Recent work of the author has focused on methods for establishing steady state properties of mass-action systems through the method of \emph{network translation} \cite{J1,J2,Tonello2017,J-M-P,J-B2018}. In this approach, the reaction graph of a chemical reaction network is corresponded to a generalized chemical reaction network with more desirable topological properties, such as weak reversibility and a low deficiency. In a generalized network, there are two sets of a complexes: (i) \emph{stoichiometric complexes}, which determine the stoichiometry of the network; and (ii) \emph{kinetic-order complexes}, which determine the rate of each reaction. These properties can then be used to construct a steady state parametrization which is monomial \cite{MR2014} or rational \cite{J-M-P}, depending on the topological structure of the network. Translation-based results have been used in conjunction with recent computational work on multistationarity \cite{C-F-M-W2016} to establish or eliminate the capacity of multistationarity in a variety of biochemical models, including the EnvZ-OmpR osmoregulatory network \cite{Sh-F,J1}, shuttled WNT signaling network \cite{G-H-R-S,J-M-P}, and multisite phosphorylation networks \cite{M-H-K1,J-B2018}.

Nevertheless, limitations to the application of network translation remain. For example, consider the following chemical reaction network:
\begin{equation} \label{example1}
\begin{tikzcd}
 & X_2 & 2X_2 \arrow[rd,"r_3"] & & X_1 + X_2 \\[-0.15in]
X_1 \arrow[rd,"r_2"] \arrow[ru,"r_1"] & & & X_4 \arrow[rd,"r_6"] \arrow[ru,"r_5"] & \\[-0.15in]
 & X_3 & 2X_3 \arrow[ru,"r_4"] & & X_1 + X_3
\end{tikzcd}
\end{equation}
Each arrow (labeled $r_i$) corresponds to a reaction which converts the chemical species (labeled $X_j$) at the tail end into the species at the arrow end. The translation methods of \cite{J1,J2,Tonello2017,J-B2018,J-M-P} do not succeed in corresponding \eqref{example1} to a weakly reversible deficiency zero system, which would allow the construction of a monomial parametrization by classical theory \cite{H-J1,C-D-S-S,MR2014}. Despite this, it can be shown that, when modeled with mass-action kinetics, the steady state set of \eqref{example1} in fact has a monomial parametrization given by the following:
\begin{equation}
\label{parametrization}
\begin{aligned}
x_1 & = 2 \kappa_3 \kappa_4(\kappa_5+\kappa_6)\tau\\
x_2 & = \kappa_4(2\kappa_1\kappa_5+\kappa_1\kappa_6+\kappa_2\kappa_5)\tau\\
x_3 & = 2\kappa_3\kappa_4(\kappa_1+\kappa_2)\tau\\
x_4 & = \kappa_3(\kappa_1\kappa_6+\kappa_2\kappa_5+2\kappa_2\kappa_6)\tau
\end{aligned}
\end{equation}
where $\tau > 0$.
The computational method of \cite{C-F-M-W2016} can be used to establish monostationarity. That is, there is a set of parameter values for which there are two stoichiometrically-compatible positive steady states.

In this paper, we extend the notion of network translation to allow \emph{split network translation}. In a split network translation, we allow each reaction to appear \emph{multiple times} in the same network provided that the total stoichiometric change of each reaction is preserved. We use this technique to correspond \eqref{example1} to the following generalized chemical reaction network:
\begin{equation} \label{example2}
\begin{tikzcd}
\mbox{\ovalbox{$\begin{array}{c} 2X_1 \\ (X_1) \end{array}$}} \arrow[r,bend left = 10,"r_1"] \arrow[d,bend left = 10,"r_2"] & \mbox{\ovalbox{$\begin{array}{c} X_1+X_2 \\ (2X_2) \end{array}$}}  \arrow[l,bend left = 10,"r_3"] \arrow[d,bend left = 10,"r_3"]\\
\mbox{\ovalbox{$\begin{array}{c} X_1+X_3 \\ (2X_3) \end{array}$}}  \arrow[r,bend left = 10,"r_4"] \arrow[u,bend left = 10,"r_4"] & \mbox{\ovalbox{$\begin{array}{c} X_4 \\ (X_4) \end{array}$}} \arrow[l,bend left = 10,"r_6"] \arrow[u,bend left = 10,"r_5"]
\end{tikzcd}
\end{equation}
where the \emph{stoichiometric complex} is denoted as the upper term in each box and the \emph{kinetic-order complex} is denoted as the bracketed lower term in each box. 
Notice that $r_3$ and $r_4$ appear multiple times in \eqref{example2} which is not allowed by standard network translation. This generalization extends the theory and application of network translation and allows us to show that the set of positive steady states of the mass-action system corresponding to \eqref{example1} has the parametrization \eqref{parametrization}.


In additional to developing the theory of network translation in this important direction, we provide a computational algorithm utilizing mixed-integer linear programming for corresponding a given chemical reaction network to a weakly reversible split network translation. Unlike the computational method of \cite{J2}, the method presented here does not depend on knowledge of the original network's rate parameters or the stoichiometry of the translated complexes, and unlike the methods of  \cite{Tonello2017,J-B2018}, the algorithm does not depend utilize the network's elementary modes.

The paper is organized as follows. In Section \ref{sec:background}, we introduce the mathematical background for chemical reaction networks, mass-action systems, their generalized counterparts, and network translation. In Section \ref{sec:main}, we present the notion of a split network translation and a computational program utilizing mixed-integer linear programming which can be used to determine whether a given chemical reaction network admits a weakly reversible split network translation. In Section \ref{sec:examples}, we present several examples which demonstrate how split network translation extends the current application of network-based theory for analyzing mass-action systems. Finally, in Section \ref{sec:conclusions}, we summarize the paper and present some open questions for further research.


\section{Mathematical Background}
\label{sec:background}

In this section, we outline the mathematical background necessary to understand generalized chemical reaction networks, generalized mass-action systems, and network translations. We note that classical chemical reaction networks and mass-action systems, which are utilized extensively in industrial and biochemical systems, may be considered as special cases.

\subsection{Generalized Chemical Reaction Networks}
\label{sec:chemical reaction network}

A directed multigraph is given by $G = (V,E,\rho,\pi)$, where $V$ is the vertex set, $E$ is the edge set, $\rho: E \mapsto V$ is the source mapping, and $ \pi: E \mapsto V$ is the target mapping. We assume throughout that both $V$ and $E$ are finite.

When representing multigraphs graphically, we will represent edges $k \in E$ as directed arrows of the form $\begin{tikzcd} i \arrow[r,"r_{k}"] & j\end{tikzcd}$ where $i,j \in V$, $\rho(k)=i$, $\pi(k)=j$, and $r_k$ is the edge label. For simplicity, distinct edges which connect the same vertices will be represented as a single arrow with multiple labels, i.e. if $\rho(k') = \rho(k'')$ and $\pi(k') = \pi(k'')$ for $k', k'' \in E$, then we use $\begin{tikzcd}i \arrow[r,"r_{k'} \& r_{k''}"] & j\end{tikzcd}$.


The following notion was introduced in \cite{MR2012,MR2014}.

\begin{dfn}
\label{generalized chemical reaction network}
A \emph{generalized chemical reaction network} on a directed multigraph $G = (V,E,\rho,\pi)$ is a triple $(G,y,y')$ where $y, y': V \mapsto \mathbb{R}^m$. The mapping $y$ is referred to as the \emph{stoichiometric mapping}, the mapping $y'$ is referred to as the \emph{kinetic-order mapping}, and the graph $G$ is referred to as the \emph{reaction graph}. 
\end{dfn}



\begin{rem}
We extend upon the definition of a generalized chemical reaction network presented in \cite{MR2012,MR2014,J-M-P} by allowing the reaction graph $G$ to be a multigraph. This is more general than traditionally allowed in \emph{Chemical Reaction Network Theory} \cite{Feinberg1979} in two notable ways: (1) we allow \emph{self loops} (i.e. edges $k \in E$ with $\rho(k) = \pi(k)$); and (2) we allow \emph{multiple edges} to connect the same vertices (i.e. $r_{k'}$ and $r_{k''}$ with $\rho(k') = \rho(k'')$ and $\pi(k')=\pi(k'')$). This generalization will be necessary to define and utilize a split network translation (Definition \ref{def:splitting}).
\end{rem}

We interpret the mappings $y$ and $y'$ as representing linear combinations of species from the {\em species set} $\{X_1, \ldots, X_m\}$. For example, we interpret $y(i) = (1,0,1)$ as representing the combination $X_1 + X_3$, which could be an input or output for a given reaction. The linear combinations of species arising from $y$ are known as \emph{stoichiometric complexes} and those arising from $y'$ are known as \emph{kinetic-order complexes}.

Many aspects of the network topology of reaction graphs have been studied in the context of generalized chemical reaction networks \cite{H-J1,MR2012,MR2014}. To each edge $k \in E$ we associate a \emph{reaction vector} $y(\pi(k)) - y(\rho(k)) \in \mathbb{R}^m$. The span of the reaction vectors is known as the \emph{stoichiometric subspace} of the network:
\[S = \mbox{span}\{ y(\pi(k)) - y(\rho(k)) \; | \; k \in E \}.\]
The \emph{kinetic-order subspace} of a generalized chemical reaction network is defined similarly:
\[S' = \mbox{span}\{ y'(\pi(k)) - y'(\rho(k)) \; | \; k \in E \}.\]
Note that, since $y$ and $y'$ may be defined independently, the dimensions of $S$ and $S'$ may differ.

Two vertices or a reaction graph are said to be \emph{connected} if there is a sequence of undirected reactions which connect them. A set of connected vertices is called a \emph{linkage class}. Two complexes are said to be \emph{strongly connected} if the existence of a directed path from one complex to another implies the existence of a directed path back. A set of strongly connected complexes is called a \emph{strong linkage class}. A network is \emph{reversible} is a reaction from one complex to another complex implies the existence of a reversible reaction, and \emph{weakly reversible} if its linkage classes and strong linkage classes coincide. The \emph{stoichiometric deficiency} of a network is a nonnegative integer defined by the formula $\delta = n - \ell - \mbox{dim}(S)$ where $n$ is the number of vertices, $\ell$ is the number of linkage classes, and $S$ is the stoichiometric subspace. The \emph{kinetic-order deficiency} is defined similarly as $\delta' = n - \ell - \mbox{dim}(S')$. The deficiency was introduced by Feinberg and Horn in the papers \cite{Feinberg1972,H} in the context of studying complex-balanced mass-action systems \cite{H-J1}. The relationship between the deficiency and steady state properties of dynamical models of chemical reaction systems has been studied significantly since \cite{J1,MR2012,Feinberg1995-1,Feinberg1995-2,Feinberg1988,Feinberg1987}.


\begin{rem}
To incorporate the mappings $y$ and $y'$ into the vertices, we will represent each vertex as a box containing the two complexes (stoichiometric complex upper, kinetic-order complex lower and bracketed) \cite{Tonello2017,J-M-P}. Notice that the mappings $y$ and $y'$ are not required to be injective and consequently a single complex may be embedded in multiple vertices.
\end{rem}



\begin{exa}
\label{example7}
Consider the following generalized chemical reaction network:
\begin{equation} \label{example6}
\begin{tikzcd}
\mbox{\ovalbox{$\begin{array}{c} 1 \\ \\ \end{array} \Bigg\lvert \begin{array}{c} X_1 \\ (X_1+X_2) \end{array}$}} \arrow[r,"r_1"] & \mbox{\ovalbox{$\begin{array}{c} 2 \\ \\ \end{array} \Bigg\lvert \begin{array}{c} X_2 \\ (2X_3) \end{array}$}}  \arrow[r,"r_2 \& r_3"] & \mbox{\ovalbox{$\begin{array}{c} 3 \\ \\ \end{array} \Bigg\lvert \begin{array}{c} X_3 \\ (X_1+X_2) \end{array}$}} \arrow[ll,bend left = 25,"r_5"]\arrow[loop right,"r_4"]\\
\end{tikzcd}
\end{equation}
where each vertex is represented with a box with the index on the left and the stoichiometric (upper) and kinetic-order (lower, bracketed) complex on the right. We have the multigraph $G = (V,E,\rho,\pi)$ with $V = \{ 1, 2, 3\}$, $E = \{ 1, 2, 3, 4, 5 \}$, $\rho(1) = 1$, $\rho(2) = 2$, $\rho(3) = 2$, $\rho(4) = 3$, $\rho(5) = 3$, $\pi(1) = 2$, $\pi(2) = 3$, $\pi(3) = 3$, $\pi(4) = 3$, and $\pi(5) = 1$. Note that the edges $r_2$ and $r_3$ both correspond to $2 \to 3$, which for simplicity we represent as a single arrow with multiple labels. We also have the self-loop $3 \to 3$. We have the mappings $y$ and $y'$ with $y(1) = (1,0,0)$, $y(2) = (0,1,0)$, $y(3) = (0,0,1)$, $y'(1) = (1,1,0)$, $y'(2) = (0,0,2)$, and $y'(3) = (1,1,0)$. The network has one linkage class ($\ell = 1$), is not reversible, but is weakly reversible. Note that the kinetic-order complex $X_1 + X_2$ is embedded in vertex $1$ and $3$ so that $y'$ is not injective. The stoichiometric subspace is given by $S = \mbox{span} \{ (-1,1,0), (0,-1,1) \}$ and the kinetic-order subspace is given by $S' = \mbox{span} \{ (-1,-1,2) \}$ so that $\mbox{dim}(S) = 2$ and $\mbox{dim}(S') = 1$. We compute that $\delta = 3 - 1 - 2 = 0$ and $\delta' = 3 - 1 -1 = 1$. \hfill $\square$
\end{exa}

\subsection{Generalized Mass-Action Systems}
\label{sec:gmas}

To a given generalized chemical reaction network $(G,y,y')$ with reaction graph $G = (V, E, \rho, \pi)$, we associate a system of ordinary differential equations where the rate of each reaction is proportional to the product of the chemical concentrations of the reactant species in the kinetic-order complex. For example, a reaction from the kinetic-order complex $X_i + X_j$ would have rate $\kappa x_i x_j$. This assumption was first made in \cite{MR2012} and is a generalization of mass-action kinetics \cite{M-M} inspired heavily by power-law kinetics \cite{Sa}.

Given a vector of chemical concentrations $\mathbf{x} = (x_1, \ldots, x_m) \in \mathbb{R}_{\geq 0}^m$ and a vector of rate constants $\kappa = (\kappa_1, \ldots, \kappa_{|E|}) \in \mathbb{R}_{\geq 0}^{|E|}$, we have the \emph{generalized mass-action system}
\begin{equation}
\label{gmas}
\frac{d\mathbf{x}}{dt} = \sum_{k \in E} \kappa_k (y(\pi(k)) - y(\rho(k))) \; \mathbf{x}^{y'(\rho(k))}
\end{equation}
where we use the convention that, for $\mathbf{x}, \mathbf{y} \in \mathbb{R}^m$, $\mathbf{x}^{\mathbf{y}} = \prod_{j=1}^m x_j^{y_j}$.

\begin{exa}
Consider the generalized chemical reaction network \eqref{example6} given in Example \ref{example7}. With the rate constant vector $(\kappa_1, \kappa_2, \kappa_3, \kappa_4, \kappa_5) \in \mathbb{R}^5_{> 0}$, we have the generalized mass-action system
\[
\left( \begin{array}{c} \dot{x}_1 \\ \dot{x}_2 \\ \dot{x}_3 \end{array} \right) = \kappa_1 \left( \begin{array}{c} -1 \\ 1 \\ 0 \end{array} \right) x_1 x_2 + (\kappa_2 + \kappa_3) \left( \begin{array}{c} 0 \\ -1 \\ 1 \end{array} \right) x_2 x_3^2 + \kappa_5 \left( \begin{array}{c} 1 \\ 0 \\ -1 \end{array} \right) x_1 x_2
\]
in the chemical concentrations $x_1, x_2,$ and $x_3$. Notice that the duplicated edge $2 \to 3$ contributes two rate constants ($\kappa_2$ and $\kappa_3$) and the self-loop $3 \to 3$ does not contribute any ($\kappa_4$ does not appear) since the corresponding reaction vector is $y(\pi(4)) - y(\rho(4)) = (0,0,0)$. \hfill $\square$
\end{exa}

\subsection{Chemical Reaction Networks}

The following concept can be seen as a special case of generalized chemical reaction networks.

\begin{dfn}
Consider a generalized chemical reaction network $(G,y,y')$ with multigraph $G = (V,E,\rho,\pi)$ and mappings $y,y': V \mapsto \mathbb{R}^m$. The generalized chemical reaction network is a \emph{chemical reaction network} (chemical reaction network) if $y = y'$ and $y$ is injective. Chemical reaction networks will be denoted by $(G,y)$.
\end{dfn}


For chemical reaction networks, it is unnecessary to distinguish between stoichiometric and kinetic-order complexes, subspaces, or deficiencies. Consequently, we only speak of complexes, the stoichiometric subspace ($S$), and the deficiency ($\delta$). The reaction graph may furthermore be simplified since the vertices are in one-to-one correspondence with the complexes. The corresponding ordinary differential equation model is a \emph{mass-action system} given by
\begin{equation}
\label{mas}
\frac{d\mathbf{x}}{dt} = \sum_{k \in E} \kappa_k (y(\pi(k)) - y(\rho(k))) \mathbf{x}^{y(\rho(k))}.
\end{equation}
Mass-action systems are frequently used to model systems drawn from industrial processes \cite{E-T,Sharma2000} and systems biology \cite{Ingalls,Alon2007}.

\begin{exa}
Consider the following chemical reaction network, which is derived from the classical Lotka-Volterra system in population dynamics \cite{Lotka,Volterra}:
\begin{equation} \label{lv}
\begin{tikzcd}
X_1 \arrow[r,"r_1"] & 2X_1, & X_1+X_2 \arrow[r,"r_2"] & 2X_2, &
X_2 \arrow[r,"r_3"] & \O. \\[-0.2in]
\end{tikzcd}
\end{equation}
Since each vertex is assigned a unique complex by the injective mapping $y$, we allow the complexes (e.g. $X_1$, $2X_1$, etc.) to identify the corresponding vertices. The network has $2$ species, $6$ complexes, $3$ reactions, $3$ linkage classes, and a $2$-dimensional stoichiometric subspace. The network is neither reversible nor weakly reversible and has a deficiency of $\delta = 6 - 2 - 2 = 2$. The mass-action system \eqref{mas} corresponding to \eqref{lv} is given by
\[
\left( \begin{array}{c} \dot{x}_1 \\ \dot{x}_2 \end{array} \right) = \kappa_1 \left( \begin{array}{c} 1 \\ 0 \end{array} \right) x_1 + \kappa_2 \left( \begin{array}{c} -1 \\ 1\end{array} \right) x_1x_2 + \kappa_3 \left( \begin{array}{c} 0 \\ -1 \end{array} \right) x_2.
\]
\end{exa}

\begin{exa}
Consider the chemical reaction network \eqref{example1} given in Section \ref{sec:intro}. The network has $4$ species, $7$ complexes, $6$ reactions, $2$ linkage classes, and a $3$-dimensional stoichiometric subspace. It is not reversible or weakly reversible and has a deficiency of $\delta = 7 - 2 -3 = 2$. The mass-action system \eqref{mas} corresponding to \eqref{example1} is given by
\small
\begin{equation}
\label{mas1}
\left( \begin{array}{c} \dot{x}_1 \\ \dot{x}_2 \\ \dot{x}_3 \\ \dot{x}_4 \end{array} \right) = \kappa_1 \left( \begin{array}{c} -1 \\ 1 \\ 0 \\ 0 \end{array} \right) x_1 + \kappa_2 \left( \begin{array}{c} -1 \\0 \\ 1 \\ 0\end{array} \right) x_1 + \kappa_3 \left( \begin{array}{c} 0 \\ -2 \\ 0 \\ 1\end{array} \right) x_2^2 + \kappa_4 \left( \begin{array}{c} 0 \\ 0 \\ -2 \\ 1\end{array} \right) x_3^3+ \kappa_5 \left( \begin{array}{c} 1 \\ 1 \\ 0 \\ -1\end{array} \right) x_4 + \kappa_6 \left( \begin{array}{c} 1 \\ 0 \\ 1 \\ -1 \end{array} \right) x_4.
\end{equation}
\normalsize
\end{exa}

\subsection{Translated Chemical Reaction Networks}
\label{sec:tchemical reaction network}

The following construction was introduced in \cite{J1} as a method for relating chemical reaction networks to generalized chemical reaction networks with different network structure.

\begin{dfn}
\label{def:translation}
Consider a chemical reaction network $(G,y)$ with directed multigraph $G=(V,E, \rho, \pi)$. A generalized chemical reaction network $(\tilde G,\tilde y,\tilde y')$ with directed multigraph $\tilde G=(\tilde V, \tilde E, \tilde \rho, \tilde \pi)$ is a \emph{translation} of $(G,y)$ if there exists a bijective mapping $\alpha: E \mapsto \tilde E$ such that:
\begin{enumerate}
\item[(a)]
for all $k', k'' \in E$, $\rho(k') = \rho(k'')$ implies $\tilde \rho(\alpha(k')) = \tilde \rho(\alpha(k''))$;
\item[(b)]
for all $k \in E$, $\tilde y'(\tilde \rho(\alpha(k))) = y(\rho(k))$; and
\item[(c)]
for all $k \in E$, $\tilde y(\tilde \pi(\alpha(k)))-\tilde y(\tilde \rho(\alpha(k))) = y(\pi(k)) - y(\rho(k))$.
\end{enumerate}
\end{dfn}

\begin{lem}[Lemma 16, \cite{J-M-P}]
\label{lemma16}
Let $(G,y)$ be a chemical reaction network with reaction graph $G = (V,E,\rho,\pi)$ and let the generalized chemical reaction network $(\tilde G, \tilde y, \tilde y')$ with reaction graph $\tilde G = (\tilde V, \tilde E, \tilde \rho, \tilde \pi)$ be a translation of $(G,y)$. Then the mass-action system \eqref{mas} corresponding to $(G,y)$ and the generalized mass-action system \eqref{gmas} corresponding to $(\tilde G, \tilde y, \tilde y')$ are dynamically equivalent. 
\end{lem}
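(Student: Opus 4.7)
The plan is to prove dynamical equivalence by direct comparison of the right-hand sides of the two ordinary differential equations, using the bijection $\alpha: E \to \tilde E$ as a relabeling that makes the edge sums match term by term. Specifically, I would begin by writing out the mass-action system \eqref{mas} for $(G,y)$ and the generalized mass-action system \eqref{gmas} for $(\tilde G,\tilde y,\tilde y')$, indexing the first over $k \in E$ and the second over $\tilde k \in \tilde E$.

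Next, I would introduce a compatible choice of rate constants: given $\kappa \in \mathbb{R}^{|E|}_{\geq 0}$ for the mass-action system, define $\tilde\kappa_{\alpha(k)} := \kappa_k$ for each $k \in E$ (since $\alpha$ is a bijection, this defines every $\tilde\kappa_{\tilde k}$). Re-indexing the sum in \eqref{gmas} via $\tilde k = \alpha(k)$, the generalized system becomes
\begin{equation*}
\frac{d\mathbf{x}}{dt} \;=\; \sum_{k \in E} \kappa_k \, \bigl(\tilde y(\tilde \pi(\alpha(k))) - \tilde y(\tilde \rho(\alpha(k)))\bigr) \, \mathbf{x}^{\tilde y'(\tilde \rho(\alpha(k)))}.
\end{equation*}
Now I would invoke conditions (b) and (c) of Definition \ref{def:translation} to match this term by term with \eqref{mas}: condition (c) equates the reaction vectors $\tilde y(\tilde \pi(\alpha(k))) - \tilde y(\tilde \rho(\alpha(k)))$ with $y(\pi(k)) - y(\rho(k))$, while condition (b) equates the exponent vectors $\tilde y'(\tilde \rho(\alpha(k)))$ with $y(\rho(k))$, so that $\mathbf{x}^{\tilde y'(\tilde \rho(\alpha(k)))} = \mathbf{x}^{y(\rho(k))}$. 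Each summand therefore coincides with the corresponding summand of \eqref{mas}, establishing pointwise equality of the vector fields and hence dynamical equivalence.

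The only subtle point, and the one I would take care to address explicitly, is the role of condition (a). It is not used in the term-by-term comparison above, but rather guarantees that the kinetic-order assignment $k \mapsto \tilde y'(\tilde \rho(\alpha(k)))$ is consistent with the source structure of $G$: whenever two edges $k', k''$ of $G$ share a source, their images under $\alpha$ share a source in $\tilde G$, so they are assigned the same kinetic-order complex via $\tilde y' \circ \tilde \rho$. This is what makes the definition of a translation internally consistent with condition (b) and prevents an ill-posed situation in which the same reactant complex in $(G,y)$ would be required to correspond to two different kinetic-order complexes in $(\tilde G,\tilde y,\tilde y')$. I expect no substantive obstacle beyond this bookkeeping; the proof is essentially a verification that Definition \ref{def:translation} was designed precisely to make the two right-hand sides agree.
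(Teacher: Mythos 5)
Your proof is correct and follows essentially the same route the paper uses: the paper does not reprove Lemma \ref{lemma16} (it cites \cite{J-M-P}), but its proof of the generalization, Theorem \ref{dynamicalequivalence}, is exactly your argument --- re-index the sum through the bijection, match reaction vectors via the stoichiometric condition and monomials via the kinetic-order condition --- specialized from $q$ slices down to one. Your observation that Condition (a) plays no role in the term-by-term computation and only enforces consistency of the source structure is also accurate.
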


\noindent Network translation allows chemical reaction networks to be related to generalized chemical reaction networks with potentially superior network structure, such as weak reversibility and a low deficiency. Computational methods for finding network translations have been developed \cite{J2,Tonello2017,J-B2018}.

Network translation is commonly visualized by adding or subtracting species to both sides of a reaction in order to form new connections in the reaction graph. This process does not change the stoichiometric difference across any reaction edge, so it satisfies Condition (c) of Definition \ref{def:translation}, and we can satisfy Condition (b) of Definition \ref{def:translation} by allowing the source complex in the original chemical reaction network to become the kinetic-order complex of the translation.

Consider the following example.


\begin{exa}
Reconsider the Lotka-Volterra system \eqref{lv} and the associated translation scheme:
\begin{equation} \label{translation11}
\begin{tikzcd}
X_1 \arrow[r,"r_1"] & 2X_1 & & (-X_1) \\[-0.2in]
X_1+X_2 \arrow[r,"r_2"] & 2X_2 & & (-X_2) \\[-0.2in]
X_2 \arrow[r,"r_3"] & \O & & (\O) \\[-0.2in]
\end{tikzcd}
\end{equation}
This results in the following network translation:
\begin{equation} \label{example432}
\begin{tikzcd}
 \mbox{\ovalbox{$\begin{array}{c} \O \\ (X_1) \end{array}$}} \arrow[rr,"r_1"] & & \mbox{\ovalbox{$\begin{array}{c} X_1  \\ (X_1 + X_2) \end{array}$}} \arrow[ld,"r_2"'] \\[-0.15in]
& \mbox{\ovalbox{$\begin{array}{c} X_2 \\ (X_2) \end{array}$}} \arrow[lu,"r_3"']&
\end{tikzcd}
\end{equation}
The mass-action system \eqref{mas} corresponding to \eqref{translation11} and generalized mass-action system \eqref{gmas} corresponding to \eqref{example432} are identical. Notice that the network translation \eqref{example432} is weakly reversible and has a stoichiometric and kinetic-order deficiency of zero while the original network \eqref{translation11} is not weakly reversible and has a deficiency of one. 
\end{exa}

\section{Main Results}
\label{sec:main}

In this section, we introduce the notion of a \emph{split network translation} of a chemical reaction network and show how this concept may be used to expand the scope of mass-action systems which can be analyzed through network translation. We also present a computational algorithm which corresponds a given chemical reaction network to a weakly reversible split network translation.

\subsection{Split Network Translation}
\label{sec:splitting}


The following notion extends network translation (Definition \ref{def:translation}) and is the primary new concept introduced of this paper.

\begin{dfn}
\label{def:splitting}
Consider a chemical reaction network $(G,y)$ with directed multigraph $G=(V,E, \rho, \pi)$. Also consider a family of generalized chemical reaction networks $(\tilde G^{(l)},\tilde y, \tilde y')$, $l \in Q$, where $Q = \{ 1, \ldots, q\}$, with directed multigraphs $\tilde G^{(l)} = (\tilde V, \tilde E^{(l)}, \tilde \rho^{(l)}, \tilde \pi^{(l)})$, $l \in Q$, and let $(\tilde G, \tilde y, \tilde y')$ be a generalized chemical reaction network with directed multigraph $\tilde G = (\tilde V, \tilde E, \tilde \rho, \tilde \pi)$ where $\displaystyle{\tilde E = \tilde E^{(1)} \cup \cdots \cup \tilde E^{(q)}}$ and $\tilde E^{(i)} \cap \tilde E^{(j)} = \emptyset$ for $i, j \in Q, i \not= j$.

Then $(\tilde G, \tilde y,\tilde y')$ is a \emph{split network translation} of $(G, y)$ if there is a family of bijective mappings $\alpha^{(l)}: E \mapsto \tilde E^{(l)}$, $l \in Q$, such that:
\begin{enumerate}
\item[(a)]
for all $k \in E$ and $l', l'' \in Q$, $\tilde \rho^{(l')}(\alpha^{(l')}(k)) = \tilde \rho^{(l'')}(\alpha^{(l'')}(k))$ so that there is a uniform source mapping $\beta: E \mapsto \tilde V$ given by $\beta(k) := \tilde \rho^{(l)}(\alpha^{(l)}(k))$, $l \in Q$;
\item[(b)]
for all $k', k'' \in E$, $\rho(k') = \rho(k'')$ implies $\beta(k') = \beta(k'')$;
\item[(c)]
for all $k \in E$, $\tilde y'(\beta(k)) = y(\rho(k))$; and
\item[(d)]
for all $k \in E$, $\displaystyle{\sum_{l \in Q} \left(\tilde y(\tilde \pi(\alpha^{(l)}(k)))-\tilde y( \beta(k))\right) = y(\pi(k)) - y(\rho(k))}$.
\end{enumerate}
\end{dfn}

We have the following extension of Lemma \ref{lemma16}.

\begin{thm}
\label{dynamicalequivalence}
Consider a chemical reaction network $(G,y)$ with reaction graph $G = (V,E, \rho, \pi)$. Suppose that $(G,y)$ has a split network translation ($\tilde G, \tilde y, \tilde y')$ with reaction graph $\tilde G = (\tilde V, \tilde E, \tilde \rho, \tilde \pi)$ and slices $(\tilde G^{(l)}, \tilde y)$ where $\tilde G^{(l)} = (\tilde V, \tilde E^{(l)}, \tilde \rho^{(l)}, \tilde \pi^{(l)})$, $l \in Q$. 
Then the generalized mass-action system \eqref{gmas} corresponding to $(\tilde G, \tilde y, \tilde y')$ and the mass-action system \eqref{mas} corresponding to $(G,y)$ are dynamically equivalent.
\end{thm}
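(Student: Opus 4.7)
My plan is to prove Theorem \ref{dynamicalequivalence} by direct calculation, showing that with a natural correspondence between rate constants the two right-hand sides of the ODE systems are term-by-term equal. Specifically, I would assign to the generalized mass-action system the rate constants $\tilde\kappa_{\alpha^{(l)}(k)} := \kappa_k$ for every $k \in E$ and every $l \in Q$, so that each reaction in slice $\tilde G^{(l)}$ inherits its rate from the original reaction it is the image of under $\alpha^{(l)}$.

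First I would rewrite the generalized mass-action system \eqref{gmas} for $(\tilde G,\tilde y,\tilde y')$ using the partition $\tilde E = \tilde E^{(1)}\cup\cdots\cup \tilde E^{(q)}$ and the bijections $\alpha^{(l)}$ to reindex the sum as a double sum over $l\in Q$ and $k\in E$:
\begin{equation*}
\frac{d\mathbf{x}}{dt} \;=\; \sum_{\tilde k\in \tilde E} \tilde\kappa_{\tilde k}\bigl(\tilde y(\tilde\pi(\tilde k))-\tilde y(\tilde\rho(\tilde k))\bigr)\mathbf{x}^{\tilde y'(\tilde\rho(\tilde k))}
\;=\; \sum_{l\in Q}\sum_{k\in E} \kappa_k\bigl(\tilde y(\tilde\pi(\alpha^{(l)}(k)))-\tilde y(\tilde\rho^{(l)}(\alpha^{(l)}(k)))\bigr)\mathbf{x}^{\tilde y'(\tilde\rho^{(l)}(\alpha^{(l)}(k)))}.
\end{equation*}
Condition (a) of Definition \ref{def:splitting} lets me replace $\tilde\rho^{(l)}(\alpha^{(l)}(k))$ by the common value $\beta(k)$, independent of $l$, so the monomial factor $\mathbf{x}^{\tilde y'(\beta(k))}$ and the ``subtracted'' vertex $\tilde y(\beta(k))$ do not depend on $l$ and can be pulled out of the inner sum.

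Next I would apply condition (c), which gives $\tilde y'(\beta(k)) = y(\rho(k))$, so that $\mathbf{x}^{\tilde y'(\beta(k))} = \mathbf{x}^{y(\rho(k))}$ matches the kinetic term of the original mass-action system \eqref{mas}. After interchanging the order of summation, the expression becomes
\begin{equation*}
\sum_{k\in E} \kappa_k \left(\sum_{l\in Q}\bigl(\tilde y(\tilde\pi(\alpha^{(l)}(k)))-\tilde y(\beta(k))\bigr)\right)\mathbf{x}^{y(\rho(k))},
\end{equation*}
and condition (d) collapses the inner sum to $y(\pi(k))-y(\rho(k))$, yielding exactly the mass-action system \eqref{mas} for $(G,y)$.

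I expect no serious obstacle: the theorem is essentially a bookkeeping statement, and each of the four conditions in Definition \ref{def:splitting} is designed to make one step of this calculation go through (condition (a) makes $\beta$ well defined, (b) makes the assignment of rate constants consistent across reactions sharing a source, (c) aligns the kinetic monomials, and (d) restores the original stoichiometry). The only place one must be careful is in handling the multigraph structure of $\tilde G$, since distinct edges of $\tilde E^{(l)}$ may connect the same pair of vertices; using the edge-indexed bijections $\alpha^{(l)}$ rather than reasoning at the level of vertices avoids any ambiguity here.
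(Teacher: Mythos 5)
Your proof is correct and follows essentially the same direct term-by-term calculation as the paper, just written in the reverse direction (starting from the generalized system and collapsing to the original, rather than expanding the original via Condition (d)). Your explicit assignment $\tilde\kappa_{\alpha^{(l)}(k)} := \kappa_k$ is a welcome clarification of a point the paper leaves implicit.
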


\begin{proof}
Consider a chemical reaction network $(G,y)$ with directed multigraph $G = (V,E, \rho, \pi)$ and a generalized chemical reaction network $(\tilde G, \tilde y, \tilde y')$ with directed multigraph $\tilde G = (\tilde V, \tilde E, \tilde \rho, \tilde \pi)$. Suppose that $(\tilde G, \tilde y, \tilde y')$ is a split network translation of $(G,y)$ according to Definition \ref{def:splitting} with slices $(\tilde G^{(l)}, \tilde y, \tilde y')$ where $\tilde G^{(l)} = (\tilde V, \tilde E^{(l)}, \tilde \rho^{(l)}, \tilde \pi^{(l)})$, $l \in Q$.

The mass-action system \eqref{mas} corresponding to $(G,y)$ can be written
\[\begin{aligned}
\frac{d\mathbf{x}}{dt} & = \sum_{k \in E} \kappa_k \left( y(\pi(k)) - y(\rho(k))\right) \; \mathbf{x}^{y(\rho(k))} \\
& = \sum_{k \in E} \sum_{l \in Q} \kappa_k \left( \tilde y (\tilde \pi^{(l)}(\alpha^{(l)}(k))) - \tilde y (\beta(k)) \right) \mathbf{x}^{y(\rho(k))}
\end{aligned}\]
by Conditions (a) and (d) of Definition \ref{def:splitting}. 
This corresponds to the generalized mass-action system \eqref{gmas} for the generalized chemical reaction network with reactions of the following form:
\[
\begin{tikzcd}
\mbox{\ovalbox{$\begin{array}{c} \tilde y(\beta(k)) \\[0.05in] \left(y(\rho(k)) \right) \end{array}$}} \arrow[r,"r_k"] & \mbox{\ovalbox{$\begin{array}{c} \tilde y(\tilde \pi^{(l)}(\alpha^{(l)}(k))) \\[0.05in] (-) \end{array}$}}.
\end{tikzcd}
\]
Clearly we have that $\tilde y'(\beta(k)) = y(\rho(k))$ so that Condition (c) is satisfied, and we are done.
\end{proof}


A split network translation (Definition \ref{def:splitting}) captures many of the features of network translation (Definition \ref{def:translation}). We require that edges with the same source be mapped to edges with the same source in the translation (Condition (b)) and that the kinetic complex in the translation be  derived from the sources of the stoichiometric mapping which are translated to it (Condition (c)). In a split network translation, however, we allow there to be $q \in \mathbb{Z}_{>0}$ copies of the reactions of a chemical reaction network so long as the sources of each reaction is the same in each slice (Condition (a)), and that the network is structured so that the stoichiometric change is preserved across the union of all the individual slices (Condition (d)). Note that when $q = 1$ (i.e. there is only one slice), Condition (a) of Definition \ref{def:splitting} is trivially satisfied, and the remaining conditions coincide with those of Definition \ref{def:translation}.


Consider the following example.

\begin{exa}
\label{example34}
Consider the following chemical reaction network:
\begin{equation} \label{example3}
\begin{tikzcd}
\ovalbox{$\; 1 \; \Big\lvert \; 2X_1 \;$} \arrow[r,"r_1"] & \ovalbox{$\; 2 \; \Big\lvert \; X_2 \;$} \arrow[r,"r_2"] & \ovalbox{$\; 3 \; \Big\lvert \; \O \;$}
\end{tikzcd}
\end{equation}
This corresponds to the chemical reaction network $(G,y)$ on the reaction graph $G = (V,E,\rho,\pi)$ where $V = \{ 1, 2, 3 \}$, $E = \{ 1, 2 \}$, $\rho(1) = 1$, $\rho(2) = 2$, $\pi(1) = 2$, $\pi(2) = 3$, $y(1) = (2,0)$, $y(2) = (0,1)$, and $y(3) = (0,0)$. Furthermore, we have the reaction vectors $y(\pi(1)) - y(\rho(1)) = (-2,1)$ and $y(\pi(2)) - y(\rho(2)) = (0,-1)$. 

Now consider the following generalized chemical reaction networks:
\begin{equation}
\label{slices}
\begin{tikzcd}
(\tilde G^{(1)}, \tilde y, \tilde y'): & \mbox{\ovalbox{$\begin{array}{c} 1 \\ \\ \end{array} \Bigg\lvert \begin{array}{c} X_1 \\ (2X_1) \end{array}$}} \arrow[rr,"r_1^{(1)}"] & & \mbox{\ovalbox{$\begin{array}{c} 2 \\ \\ \end{array} \Bigg\lvert \begin{array}{c} X_2 \\ (X_2) \end{array}$}} \arrow[loop right,"r_2^{(1)}"] \\
& & \mbox{\ovalbox{$\begin{array}{c} 3 \\ \\ \end{array} \Bigg\lvert \begin{array}{c} \O \\ (\O) \end{array}$}} & \\
(\tilde G^{(2)}, \tilde y, \tilde y'): &  \mbox{\ovalbox{$\begin{array}{c} 1 \\ \\ \end{array} \Bigg\lvert \begin{array}{c} X_1 \\ (2X_1) \end{array}$}} \arrow[rd,"r_1^{(2)}"] & & \mbox{\ovalbox{$\begin{array}{c} 2 \\ \\ \end{array} \Bigg\lvert \begin{array}{c} X_2 \\ (X_2) \end{array}$}} \arrow[ld,"r_2^{(2)}"]\\
 & & \mbox{\ovalbox{$\begin{array}{c} 3 \\ \\ \end{array} \Bigg\lvert \begin{array}{c} \O \\ (\O) \end{array}$}} &
\end{tikzcd}
\end{equation}
and
\begin{equation} \label{example345}
\begin{tikzcd}
(\tilde G, \tilde y, \tilde y'): & \mbox{\ovalbox{$\begin{array}{c} 1 \\ \\ \end{array} \Bigg\lvert \begin{array}{c} X_1 \\ (2X_1) \end{array}$}} \arrow[rr,"r_1^{(1)}"] \arrow[rd,"r_1^{(2)}"] & & \mbox{\ovalbox{$\begin{array}{c} 2 \\ \\ \end{array} \Bigg\lvert \begin{array}{c} X_2 \\ (X_2) \end{array}$}} \arrow[ld,"r_2^{(2)}"] \arrow[loop right,"r_2^{(1)}"] \\
& & \mbox{\ovalbox{$\begin{array}{c} 3 \\ \\ \end{array} \Bigg\lvert \begin{array}{c} \O \\ (\O) \end{array}$}} & \\
\end{tikzcd}
\end{equation}
We have the reaction graphs $\tilde G^{(1)} = (\tilde V, \tilde E^{(1)}, \tilde \rho^{(1)}, \tilde \pi^{(1)})$, $\tilde G^{(2)} = (\tilde V, \tilde E^{(2)}, \tilde \rho^{(2)}, \tilde \pi^{(2)})$, $\tilde G = (\tilde V, \tilde E, \tilde \rho, \tilde \pi)$ with $\tilde V = \{ 1, 2, 3 \}$, $\tilde E^{(1)} = \{ 1^{(1)}, 2^{(1)} \}$, $\tilde E^{(2)} = \{ 1^{(2)}, 2^{(2)} \}$, $\tilde E = \tilde E^{(1)} \cup \tilde E^{(2)}$, $\tilde \rho^{(1)}(1^{(1)}) = 1$, $\tilde \rho^{(1)}(2^{(1)}) = 2$, $\tilde \pi^{(1)}(1^{(1)}) = 2$, $\tilde \pi^{(1)}(2^{(1)}) = 2$, $\tilde \rho^{(2)}(1^{(2)}) = 1$, $\tilde \rho^{(2)}(2^{(2)}) = 2$, $\tilde \pi^{(2)}(1^{(2)}) = 3$, $\tilde \pi^{(2)}(2^{(2)}) = 3$, $\tilde \rho = \tilde \rho^{(1)} \cup \rho^{(2)}$, and $\tilde \pi = \tilde \pi^{(1)} \cup \pi^{(2)}$, and the stoichiometric and kinetic-order mappings $\tilde y(1) = (1,0)$, $\tilde y(2) = (0,1)$, $\tilde{3} = (0,0)$, $\tilde y'(1) = (2,0)$, $\tilde y'(2) = (0,1)$, and $\tilde y'(3) = (0,0)$.

The networks in \eqref{slices} represent \emph{slices} of \eqref{example345} (Definition \ref{def:splitting}) with the mappings $\alpha^{(i)}: E \mapsto \tilde E^{(i)}$ given by $\alpha^{(i)}(j) = j^{(i)}$. Notice that: (i) there is a copy of each reaction on each slice (i.e. the mappings $\alpha^{(i)}$ are bijective); (ii) each reaction has the same source in every slice (Condition (a) is satisfied); (iii) reactions with the same sources in the original network \eqref{example3} trivially have the same sources in the split network translation \eqref{example345} (i.e. Condition (b) is satisfied); and (iv) the source complex of each reaction in \eqref{example3} is the kinetic complex in the split network translation \eqref{example345} (i.e. Condition (c) is satisfied).

To check Condition (d), notice that summing the reactions vectors corresponding to $r_1$ across the two slices in \eqref{slices} gives
\[ \left( \tilde y(2) - \tilde y(1) \right) + \left( \tilde y(3) - \tilde y(1) \right) = (-1,1) + (-1,0) = (-2,1) = y(2) - y(1)\]
and summing the reaction vectors corresponding to $r_2$ gives
\[\left( \tilde y(2) - \tilde y(2) \right) + \left( \tilde y(3) - \tilde y(2) \right) = (0,0) + (0,-1) = (0,-1) = y(3) - y(2).\]
It follows that \eqref{example345} satisfies Condition (d) of Definition \ref{def:splitting} and is therefore a split network translation of \eqref{example3}. 

Notice that the generalized mass-action system \eqref{gmas} corresponding to \eqref{example345} is given by:
\[
\small
\left( \begin{array}{c} \dot{x}_1 \\ \dot{x}_2 \end{array} \right) = \kappa_1 \left( \left( \begin{array}{c} -1 \\ 1 \end{array} \right) + \left( \begin{array}{c} -1 \\ 0 \end{array}\right) \right) x_1^2 + \kappa_2 \left( \left( \begin{array}{c} 0 \\ 0 \end{array} \right) + \left( \begin{array}{c} 0 \\ -1 \end{array} \right) \right) x_2 = \kappa_1 \left( \begin{array}{c} -2 \\ 1 \end{array} \right) x_1^2 + \kappa_2 \left( \begin{array}{c} 0 \\ -1 \end{array} \right) x_2.
\]
This coincides with the mass-action system \eqref{mas} corresponding to \eqref{example3} so that Theorem \ref{dynamicalequivalence} is satisfied.\\ 
\end{exa}

\begin{rem}
In general, we will represent split network translations without self-loops or superscripts. For example, we represent the generalized chemical reaction network \eqref{example345} as:
\[
\begin{tikzcd}
\mbox{\ovalbox{$\begin{array}{c} 1 \\ \\ \end{array} \Bigg\lvert \begin{array}{c} X_1 \\ (2X_1) \end{array}$}} \arrow[rr,"r_1"] \arrow[rd,"r_1"] & & \mbox{\ovalbox{$\begin{array}{c} 2 \\ \\ \end{array} \Bigg\lvert \begin{array}{c} X_2 \\ (X_2) \end{array}$}} \arrow[ld,"r_2"] \\
& \mbox{\ovalbox{$\begin{array}{c} 3 \\ \\ \end{array} \Bigg\lvert \begin{array}{c} \O \\ (\O) \end{array}$}} & \\
\end{tikzcd}
\]
\end{rem}

\begin{rem}
As with traditional network translation, when a split network translation is weakly reversible we may use known network-based results to understand important properties of the underlying generalized mass-action system of the split network translation, and use Theorem \ref{dynamicalequivalence} to extend the result to the mass-action system corresponding to the original network. In particular, we can determine whether the steady state set admits a monomial or rational parametrization according to \cite{M-D-S-C,C-D-S-S,MR2014,J-M-P} and then establish the capacity for multistationarity by the results of \cite{M-D-S-C,C-F-M-W2016}.
\end{rem}

\subsection{Computational Implementation}
\label{sec:implementation}

In general, it is challenging to know whether a given chemical reaction network can be corresponded to a network translation (Definition \ref{def:translation}) or split network translation (Definition \ref{def:splitting}) with a desired structural property (e.g. weak reversibility, low deficiency). 
Computational research has consequently been conducted on developing algorithms and computational implementation which can find network translations (Definition \ref{def:translation}).

When extending to \emph{split} network translations, we note that the algorithms of \cite{J-S4} and \cite{J2} require a given set of potential stoichiometric complexes, which is generally infeasible for networks drawn from realistic biochemical interactions. By contrast, the methods of \cite{Tonello2017} and \cite{J-B2018}, utilize the original network's elementary modes by turning them into cycles in the translation. Since splitting reactions does not preserve elementary modes, however, these methods do not readily extend to split network translations. Consequently, we instead develop a computational method which does not require a known stoichiometric complex set and which does not utilize elementary modes.


We now outline a mixed-integer linear programming framework capable of establishing whether a given chemical reaction network $(G,y)$ can be corresponded to a weakly reversible generalized chemical reaction network $(\tilde G, \tilde y, \tilde y')$ which is dynamically equivalent through split network translation (Definition \ref{def:splitting} and Theorem \ref{dynamicalequivalence}). We recall that a mixed-integer linear program can be stated in the following standard form:
\begin{equation}
\label{milp}
\begin{aligned}
& \mbox{min } \mathbf{c} \cdot \mathbf{x} \\
\mbox{subject to } & \left\{ \begin{array}{l} A_1 \mathbf{x} = \mathbf{b}_1 \\ A_2 \mathbf{x} \leq \mathbf{b}_2 \\ x_i \mbox{ is an integer for } i \in I \end{array} \right.
\end{aligned}
\end{equation}
where $\mathbf{x} \in \mathbb{R}^n$ is the vector of decision variables, $I \subseteq \{ 1, \ldots, n \}$, and $\mathbf{c} \in \mathbb{R}^n$, $\mathbf{b}_1 \in \mathbb{R}^{p_1}$, $\mathbf{b}_2 \in \mathbb{R}^{p_2}$, $A_1 \in \mathbb{R}^{p_1 \times n}$, and $A_2 \in \mathbb{R}^{p_2 \times n}$ are vectors and matrices of parameters. When \eqref{milp} contains integer-valued decision variables (i.e. $I \not= \emptyset$), the problem is NP-hard \cite{Sz2}.

We first reformulate the generalized mass-action system \eqref{gmas} corresponding to $(G,y)$ as
\[
\frac{d \mathbf{x}}{dt} = \Gamma R(\mathbf{x}; y)
\]
where $\Gamma \in \mathbb{R}^{m \times r}$ is the \emph{stoichiometric matrix} with columns $\Gamma_{\cdot, k} = y(\pi(k)) - y(\rho(k))$ and $R(\mathbf{x}; y) \in \mathbb{R}_{\geq 0}^r$ has entries $R_k(\mathbf{x}) = \kappa_k \mathbf{x}^{y(\rho(j))}$. The stoichiometric matrix $\Gamma$ can be decomposed in several ways which will be useful in our computational approach. Firstly, we have
\[
\Gamma = \Gamma_t - \Gamma_s
\]
where $\Gamma_t$ and $\Gamma_s$ are the \emph{target} and \emph{source matrices}, respectively, with columns $[\Gamma_t]_{\cdot, k} = y(\pi(k))$ and $[\Gamma_s]_{\cdot, k} = y(\rho(k))$. The source matrix $\Gamma_s$ encodes which reactions have common source complexes and is therefore required in enforcing Conditions (a) and (b) of Definition \ref{def:splitting}. We also have the following decomposition of $\Gamma$:
\[
\Gamma = Y A
\]
where $Y \in \mathbb{R}^{m \times n}$ is the \emph{complex matrix} with columns $Y_{\cdot, j} = y(j)$ and $A \in \{ -1, 0, 1 \}^{n \times r}$ is the \emph{adjacency matrix} with entries
\[A_{j,k} = \left\{ \begin{array}{ll} -1, \; \; \; \; \; \; \; & \mbox{ if } \rho(k) = j \\ 1, & \mbox{ if } \pi(k) = j \\ 0, & \mbox{ otherwise.}\end{array} \right.\]
The adjacency matrix $A$ encodes the mappings $\rho$ and $\pi$. We can further decompose $A = A_t - A_s$ where $[A_t]_{j,k} = 1$ if $\pi(k) = j$ and is $0$ otherwise, and $[A_s]_{j,k} = 1$ if $\rho(k) = j$ and is $0$ otherwise. We have the following relationships between the target and source matrices:
\[
\Gamma_t = Y A_t, \; \; \; \Gamma_s = Y A_s, \; \; \;  \mbox{ and } \; \; \; \Gamma = \Gamma_t - \Gamma_s = Y A_t - Y A_s.
\]
We now outline the mixed-integer linear programming procedure for finding a weakly reversible split network translation $(\tilde G, \tilde y, \tilde y')$ of a given chemical reaction network $(G,y)$.\\


\noindent \emph{Inputs:} We require the following as inputs, which specify $(G,y)$ and give constraints on the split network translation $(\tilde G, \tilde y, \tilde y')$:
\begin{itemize}
\item
sets $C = \{ 1, \ldots, m \}$ (\emph{species set}), $V = \{ 1, \ldots, n \}$ (\emph{vertex set}), $E = \{ 1, \ldots, r \}$ (\emph{edge set}), and $Q = \{1, \ldots, q \}$ (\emph{slice set}); and
\item
the target and source matrices $\Gamma_t$ and $\Gamma_s$ for the chemical reaction network $(G, y)$; and
\item
a small parameter $0 < \epsilon \ll 1$ and a large parameter $\delta \gg 1$ (e.g. $\delta = 1/\epsilon$).
\end{itemize}
\noindent Note that $m$, $n$, and $r$ can be determined from the source matrix $\Gamma_s$. The value of $q$ must be selected by the user prior to initializing the procedure. A value of $q=1$ produces a network translation (Definition \ref{def:translation}) and the procedure becomes more computationally intensive as $q$ is increased.\\ 

\noindent \emph{Outputs:} The procedure outputs the matrices $\tilde{Y} \in \mathbb{R}^{m \times n}$, $\tilde \Gamma_s \in \mathbb{R}^{m \times r}$, and $\tilde{A}_s \in \mathbb{R}^{n \times r}$, and $\tilde \Gamma_t^{(l)}$, $\tilde{A}_t^{(l)}$, $l \in Q$, corresponding to the split network translation $(\tilde G, \tilde y, \tilde y')$. The matrices $\tilde \Gamma_t^{(l)}$ and $\tilde A_t^{(l)}$ correspond to the target mappings in the individual slices $(\tilde G^{(l)},\tilde y)$ where $(\tilde G^{(l)} = (\tilde V, \tilde E^{(l)})$.\\

\noindent \emph{Decision variables:} We require the following decisions variables.\\

\begin{tabular}{|l|l|r|}
\hline
\mbox{Variable} & \mbox{Description} & \mbox{Sets} \\
\hline \hline
$[\tilde Y]_{i,j} \geq 0$ & Stoichiometric matrix for the split network translation & $i \in C, j \in V$ \\
\hline
$[\tilde \Gamma_t]_{i,k,l} \geq 0$ &  \begin{tabular}{@{}l@{}}Collection of $q$ matrices $\tilde \Gamma_t^{(l)} \in \mathbb{R}_{\geq 0}^{m \times r}$ corresponding \\ to the target complex matrices for the $l^{th}$ slice \end{tabular} & $i \in C, k \in E, l \in Q$\\
\hline
$[\tilde \Gamma_s]_{i,k} \geq 0$ & \begin{tabular}{@{}l@{}}Matrix $\tilde \Gamma_s \in \mathbb{R}_{\geq 0}^{m \times r}$ corresponding to the source \\ complex matrix in the split network translation \end{tabular} & $i \in C, k \in E$\\
\hline
$[\tilde A_t]_{j,k,l} \in \{ 0, 1 \}$ & \begin{tabular}{@{}l@{}}Collection of $q$ matrices $\tilde A_t^{(l)} \in \mathbb{R}_{\geq 0}^{n \times r}$ indexing the \\ targets for the $l^{th}$ slice in the split network translation \end{tabular}   & $j \in V, k \in E, l \in Q$\\
\hline
$[\tilde A_s]_{j,k} \in \{ 0, 1 \}$ & \begin{tabular}{@{}l@{}}  Matrix $\tilde A_s \in \mathbb{R}_{\geq 0}^{n \times r}$ indexing the sources for the \\ split network translation \end{tabular} & $j \in V, k \in E$\\
\hline
$[\tilde B_t]_{j,k} \geq 0$ & \begin{tabular}{@{}l@{}} Scaling of the collection of matrices $\tilde A_t^{(l)}$ for use in \\ establishing weak reversibility \end{tabular}   & $j \in V, k \in E$\\
\hline
$[\tilde B_s]_{j,k} \geq 0$ & Scaling of $\tilde A_s$ for use in establishing weak reversibility & $j \in V, k \in E$\\
\hline
$[\Delta]_{j,k,l} \in \{ 0, 1 \}$ & \begin{tabular}{@{}l@{}} Indicator matrix with $[\Delta]_{i,j,l} = 1$ if and only if \\ $[\tilde A_t]_{j,k,l} - [\tilde A_s]_{j,k} \not= 0$ \end{tabular} & $j \in V, k \in E, l \in Q$\\
\hline
$[\Lambda]_{k,l} \in \{ 0, 1 \}$ & \begin{tabular}{@{}l@{}} Indicator matrix with $[\Lambda]_{k,l} = 1$ if and only if the \\ $k^{th}$ reaction is on $l^{th}$ slice \end{tabular} & $k \in E, l \in Q$\\
\hline
\end{tabular}
\vspace{0.25in}

\noindent We require the following constraint sets to enforce that the network $(\tilde G, \tilde y, \tilde y')$ satisfies Definition \ref{def:splitting}, and is also weakly reversible.\\


\noindent \emph{Stoichiometry constraints:} To satisfy Condition (d) of Definition \ref{def:splitting}, we introduce the following constraint set:
\begin{flalign}
\tag{\textbf{Stoic}}
\label{stoichiometry}
&
\left\{ \; \; \; \begin{array}{ll} \\[-0.1in] \displaystyle{\sum_{l \in Q}\left( [\tilde \Gamma_t]_{i,k,l} - [\tilde \Gamma_s]_{i,k} \right) = [\Gamma_t]_{i,k} - [\Gamma_s]_{i,k},} & \; \; \;  i \in C, k \in E. \end{array}\right.
&
\end{flalign}


\noindent \emph{Incidence constraints:} We impose that the source (respectively, target) complex of a given reaction (i.e. the column of $\tilde \Gamma_t$ [respectively, $\tilde \Gamma_s$]), corresponds to the required complex in the translated complex set (i.e. the required column of $\tilde Y$). Specifically, we require the following logical relationships:
\[
\begin{split}
[\tilde A_s]_{j,k} = 1 \: \: & \; \; \; \Longrightarrow \; \; \; [\tilde Y]_{\cdot, j} = [\tilde \Gamma_s]_{\cdot, k} \\
[\tilde A_t]_{j,k,l} = 1 & \; \; \; \Longrightarrow \; \; \; [\tilde Y]_{\cdot, j} = [\tilde \Gamma_t]_{\cdot, k,l}.
\end{split}\]
This can be accomplished with the following constraint set:
\begin{flalign}
\tag{\textbf{Incidence 1}}
\label{incidence1}
&
\left\{ \; \; \; \begin{array}{ll}
\displaystyle{[\tilde Y]_{i,j} - \delta \left(1 - [\tilde A_s]_{j,k}\right) \leq [\tilde \Gamma_s]_{i,k}}, & \; \; \;  i \in C, j \in V, k \in E \\[0.05in]
\displaystyle{[\tilde \Gamma_s]_{i,k} \leq [\tilde Y]_{i,j} + \delta \left(1 - [\tilde A_s]_{j,k} \right)}, & \; \; \;  i \in C, j \in V, k \in E \\[0.05in]
\displaystyle{[\tilde Y]_{i,j} - \delta \left(1 - [\tilde A_t]_{j,k,l}\right) \leq [\tilde \Gamma_t]_{i,k,l}}, & \; \; \;  i \in C, j \in V, k \in E, l \in Q \\[0.05in]
\displaystyle{[\tilde \Gamma_t]_{i,k,l} \leq [\tilde Y]_{i,j} + \delta \left(1 - [\tilde A_t]_{j,k,l} \right)}, & \; \; \;  i \in C, j \in V, k \in E, l \in Q.
\end{array}\right.
&
\end{flalign}
Note that, since $\delta \gg 1$, we have that $[\tilde A_t]_{j,k,l} = 0$ and $[\tilde A_s]_{j,k} = 0$ effectively give no restrictions on $[\tilde \Gamma_t]_{i,k,l}$ or $[\tilde \Gamma_s]_{i,k}$.

We require that every reaction is assigned exactly one source complex and one target complex on each slice in $(\tilde G, \tilde y, \tilde y')$ so that the mappings $\alpha^{(l)}$, $l \in Q$, in Definition \ref{def:splitting} are bijective. This can be accomplished with the following constraint set:
\begin{flalign}
\tag{\textbf{Incidence 2}}
\label{incidence2}
&
\left\{ \; \; \; \begin{array}{ll} \\[-0.1in]
\displaystyle{\sum_{j \in V} [\tilde A_s]_{j,k}} = 1, & \; \; \;  k \in E \\[0.05in]
\displaystyle{\sum_{j \in V} [\tilde A_t]_{j,k,l}} = 1, & \; \; \; k \in E, l \in Q.
\end{array}\right.
&
\end{flalign}
Note that the reaction $k \in E$ on the slice $l\in Q$ is a self loop at vertex $j \in V$ if $[\tilde A_s]_{j,k} =1$ and $[\tilde A_t]_{j,k,l} = 1$.\\

\noindent \emph{Weakly reversibility constraints:} We want the split network translation $(\tilde G, \tilde y)$ to be weakly reversible. We can accomplish this with the following constraint set (see Appendix \ref{appendixa} for justification):
\begin{flalign}
\tag{\textbf{Weak reversibility}}
\label{wr}
&
\left\{ \; \; \; \begin{array}{ll} \\[-0.1in]
\displaystyle{\sum_{k \in E} [\tilde B_t]_{j,k}} = \displaystyle{\sum_{k \in E} [\tilde B_s]_{j,k}}, & \; \; \; j \in V \\[0.05in]
\epsilon [\tilde A_s]_{j,k} \leq [\tilde B_s]_{j,k}, & \; \; \; j \in V, k \in E\\[0.05in]
[\tilde B_s]_{j,k} \leq \delta [\tilde A_s]_{j,k}, & \; \; \; j \in V, k \in E\\[0.05in]
\displaystyle{\epsilon \left( \sum_{l \in Q} [\tilde A_t]_{j,k,l} \right) \leq [\tilde B_t]_{j,k},} & \; \; \; j \in V, k \in E\\[0.05in]
\displaystyle{[\tilde B_t]_{j,k} \leq \delta \left( \sum_{l \in Q} [\tilde A_t]_{j,k,l} \right),} & \; \; \; j \in V, k \in E
\end{array}\right.
&
\end{flalign}
\noindent The first constraint of \eqref{wr} is equivalent to $\tilde B \cdot \mathbf{1} = \mathbf{0}$ where $\tilde B = \tilde B_t - \tilde B_s$, $\mathbf{1} = (1, \ldots, 1)$, and $\mathbf{0} = (0,\ldots, 0)$. The remaining constraints guarantee that $\tilde A$ and $\tilde B$ are structurally equivalent matrices (see Definition \ref{dfn:se}).\\

\noindent \emph{Efficiency constraints:} In order to increase computational efficiency, it is desirable to remove solutions which are equivalent through, for instance, permutations of indexing. We introduce the following constraint:
\begin{flalign}
\tag{\textbf{Efficiency 1}}
\label{efficiency1}
&
\left\{ \; \; \; \begin{array}{ll} \\[-0.1in]
\displaystyle{\mathop{\sum_{k' \in E}}_{k' < k} [\tilde A_s]_{j,k'}} \geq \displaystyle{\mathop{\sum_{j' \in V}}_{j' < j} [\tilde A_s]_{j',k}}, & \; \; \; j \in V, k \in E, k \geq j 
\end{array}\right.
&
\end{flalign}
This constraint set guarantees that the source complexes are indexed so that each new source complex is assigned the slice with the lowest available index (see Section 3.4 of \cite{J-S6} for justification).


It is also computationally desirable to impose that, if multiple weakly reversible split network translations exist, we minimize the number of non-trivial (i.e. non-self loop) reactions and index the reactions on the lowest possible available slice. This requires tracking and counting the non-trival reactions. To this end, we introduce indicator variables $\Delta_{j,k,l} \in \{ 0, 1\}$, $j \in V, k \in E, l \in Q,$ and $\Lambda_{k,l} \in \{ 0, 1 \}$, $k \in E, l \in Q$, and impose the following requirements:
\begin{enumerate}
\item[(i)]
$\Delta_{j,k,l} = 1$ if and only if the vertex $j \in V$ is either a source or target for the reaction $k \in E$ on the slice $l \in Q$. We can impose this with the logical equivalency:
\[\Delta_{j,k} = 1 \; \Longleftrightarrow \; \left| [\tilde A_s]_{j,k} - [\tilde A_t]_{j,k,l} \right| = 1.\]
\item[(ii)]
$\Lambda_{j,k} = 1$ if and only if $k \in E$ is a nontrivial (i.e. non-self loop) reaction on the slice $l \in Q$. We can impose this with the logical equivalency:
\[\Delta_{j,k,l} = 1 \mbox{ for some } j \in V \Longleftrightarrow \Lambda_{j,k} = 1.\]
\item[(iii)]
Non-trivial reactions are assigned to the lowest indexed available slice.
\end{enumerate}

\noindent We introduce the following constraints:
\begin{flalign}
\tag{\textbf{Efficiency 2}}
\label{efficiency2}
&
\left\{ \; \; \; \begin{array}{ll} \\[-0.1in]
\displaystyle{[\tilde A_s]_{j,k} - [\tilde A_t]_{j,k,l} \leq [\Delta]_{j,k,l}} & j \in V, k \in E, l \in Q\\[0.05in]
\displaystyle{[\tilde A_t]_{j,k,l} - [\tilde A_s]_{j,k} \leq [\Delta]_{j,k,l}} & j \in V, k \in E, l \in Q\\[0.05in]
\displaystyle{\sum_{j \in V} [\Delta]_{j,k,l}} \leq \delta \Lambda_{k,l} & k \in E, l \in Q \\[0.05in]
\displaystyle{-\delta \Lambda_{k,l} \leq \sum_{j \in V} [\Delta]_{j,k,l}} & k \in E, l \in Q \\[0.05in]
\displaystyle{\Lambda_{k,l+1} \leq \Lambda_{k,l}} & k \in E, l \in Q, l < q 
\end{array}\right.
&
\end{flalign}
The first two constraints guarantee (i) above, the third and fourth constraints guarantee (ii), and the fifth constraint guarantees (iii).\\

\noindent \emph{Objective Function:} We introduce the following objective function:
\begin{equation}
\tag{\textbf{Objective}}
\label{objective}
\mbox{minimize} \; \; \sum_{i \in C} \sum_{j \in V} [\tilde Y]_{i,j} + \sum_{k \in E} \sum_{l \in Q} [\Lambda]_{k,l} .
\end{equation}
This objective function minimizes the total stoichiometry and the number of non-trivial reactions. Together, optimizing \eqref{objective} over the constraint sets \eqref{stoichiometry}, \eqref{incidence1}, \eqref{incidence2}, \eqref{wr}, \eqref{efficiency1}, and \eqref{efficiency2} determines, from the given chemical reaction network $(G,y)$, a split network translation $( \tilde G, \tilde y, \tilde y')$ which is weakly reversible and has up to $q$ slices. If the feasible region is empty, then there is no split network translation with up to $q$ slices.

\section{Examples}
\label{sec:examples}

In this section, we present examples which demonstrate how the algorithm presented in Section \ref{sec:implementation} may be utilized to find split network translations (Definition \ref{def:splitting}). In all the examples, the methods and theory of \cite{J1,J-B2018,Tonello2017,J2,J-M-P} to not succeed obtaining a weakly reversible translation, so that split network translation is required.

\begin{exa}
Consider the following chemical reaction network:
\begin{equation}
\label{example123}
\begin{tikzcd}
nX_1 \arrow[r,"r_1"] & nX_2 & X_2  \arrow[r,"r_2"] & X_1 
\end{tikzcd}
\end{equation}
where $n \in \mathbb{Z}_{> 0}$. The network \eqref{example123} is trivially weakly reversible for $n = 1$ but fails to have even a weakly reversible network translation (Definition \ref{def:translation}) for $n \geq 2$. The method of split translation (Definition \ref{def:splitting}), however, yields the following network: 
\begin{equation}
\label{example124}
\begin{tikzcd}
\mbox{\ovalbox{$\begin{array}{c} 1 \\ \\ \end{array} \Bigg\lvert \begin{array}{c} X_1 \\ (nX_1) \end{array}$}} \arrow[rr,yshift=5,"nr_1"] & &  \mbox{\ovalbox{$\begin{array}{c} 2 \\ \\ \end{array} \Bigg\lvert \begin{array}{c} X_2 \\ (X_2) \end{array}$}} \arrow[ll,yshift=-5,"r_2"]\end{tikzcd}
\end{equation}
corresponding to the following $n$ slices:
\[
\begin{tikzcd}
\tilde G^{(1)}: & \mbox{\ovalbox{$\begin{array}{c} 1 \\ \\ \end{array} \Bigg\lvert \begin{array}{c} X_1 \\ (nX_1) \end{array}$}} \arrow[rr,yshift=5,"r_1"] & &  \mbox{\ovalbox{$\begin{array}{c} 2 \\ \\ \end{array} \Bigg\lvert \begin{array}{c} X_2 \\ (X_2) \end{array}$} \arrow[ll,yshift=-5,"r_2"]}\\
\tilde G^{(l)}: &  \mbox{\ovalbox{$\begin{array}{c} 1 \\ \\ \end{array} \Bigg\lvert \begin{array}{c} X_1 \\ (nX_1) \end{array}$}} \arrow[rr,"r_1"] & &  \mbox{\ovalbox{$\begin{array}{c} 2 \\ \\ \end{array} \Bigg\lvert \begin{array}{c} X_2 \\ (X_2) \end{array}$} \arrow[loop right,"r_2"]}
\end{tikzcd}
\]
for $l = 2, \ldots, n$. Specifically, we have that
\[\sum_{l=1}^n (\tilde y(\rho^{(l)}(\alpha^{(l)}(1)) - \tilde y(\pi^{(l)}(\alpha^{(l)}(1)) = n \left( \begin{array}{c} -1 \\ 1 \end{array} \right) = \left( \begin{array}{c} -n \\ n \end{array} \right)\]
and
\[\sum_{l=1}^n (\tilde y(\rho^{(l)}(\alpha^{(l)}(2)) - \tilde y(\pi^{(l)}(\alpha^{(l)}(2)) = \left( \begin{array}{c} 1 \\ -1 \end{array} \right) + \sum_{l=2}^n \left( \begin{array}{c} 0 \\ 0 \end{array} \right) = \left( \begin{array}{c} 1 \\ -1 \end{array} \right)\]
which corresponds to the stoichiometry of the reaction vectors of \eqref{example123}, so that Condition (d) of Definition \ref{def:splitting} is satisfied. 

Note that the mass-action system corresponding to \eqref{example123} is
\[\left( \begin{array}{c} \dot{x}_1 \\ \dot{x}_2 \end{array} \right) = \kappa_1 \left( \begin{array}{c} -n \\ n \end{array} \right) x_1^n + \kappa_2 \left( \begin{array}{c} 1 \\ -1 \end{array} \right) x_2 = n\kappa_1 \left( \begin{array}{c} -1 \\ 1 \end{array} \right) x_1^n + \kappa_2 \left( \begin{array}{c} 1 \\ -1 \end{array} \right) x_2\]
where we can identify the right-most system as corresponding to the generalized chemical reaction network \eqref{example124} with the rescaled rate constant $n \kappa_1$. Despite this simple correspondence between \eqref{example123} and \eqref{example124}, previous work on network translation, and in particular Definition \ref{def:translation}, does not accommodate scaling of rate constants. Split network translation extends previous work in this important direction.

\end{exa}

\begin{exa}
Reconsider the chemical reaction network \eqref{example1} given in Section \ref{sec:intro}, which we denote $(G, y)$:
\begin{equation}
\label{example321}
\begin{tikzcd}
 & X_2 & 2X_2 \arrow[rd,"r_3"] & & X_1 + X_2 \\[-0.15in]
X_1 \arrow[rd,"r_2"] \arrow[ru,"r_1"] & & & X_4 \arrow[rd,"r_6"] \arrow[ru,"r_5"] & \\[-0.15in]
 & X_3 & 2X_3 \arrow[ru,"r_4"] & & X_1 + X_3
\end{tikzcd}
\end{equation}
The computational algorithms of \cite{J2,J-B2018,Tonello2017} do not succeed in finding a network translation (Definition \ref{def:translation}).

We now attempt to find a split network translation (Definition \ref{def:splitting}) using the algorithm presented in Section \ref{sec:implementation}. The algorithm identifies the following generalized chemical reaction network $(\tilde G, \tilde y, \tilde y')$ as a weakly reversible split network translation of \eqref{example321}:
\begin{equation} \label{example223}
\begin{tikzcd}
\mbox{\ovalbox{$\begin{array}{c} 1 \\ \end{array} \Bigg\lvert \begin{array}{c} 2X_1 \\ (X_1) \end{array}$}} \arrow[r,bend left = 10,"r_1"] \arrow[d,bend left = 10,"r_2"] & \mbox{\ovalbox{$\begin{array}{c} 2 \\ \end{array} \Bigg\lvert \begin{array}{c} X_1+X_2 \\ (2X_2) \end{array}$}}  \arrow[l,bend left = 10,"r_3"] \arrow[d,bend left = 10,"r_3"]\\
\mbox{\ovalbox{$\begin{array}{c} 3 \\ \end{array} \Bigg\lvert \begin{array}{c} X_1+X_3 \\ (2X_3) \end{array}$}}  \arrow[r,bend left = 10,"r_4"] \arrow[u,bend left = 10,"r_4"] & \mbox{\ovalbox{$\begin{array}{c} 4 \\ \end{array} \Bigg\lvert \begin{array}{c} X_4 \\ (X_4) \end{array}$}} \arrow[l,bend left = 10,"r_6"] \arrow[u,bend left = 10,"r_5"]
\end{tikzcd}
\end{equation}
where we have the following two slices:
\begin{equation} \label{example222}
\begin{tikzcd}
2X_1 \arrow[r,"r_1"] \arrow[dd,bend left = 10,"r_2"] & X_1+X_2   \arrow[dd,bend left = 10,"r_3"] & & 2X_1  \arrow[loop left,"r_1 \& r_2"] & X_1+X_2  \arrow[l,"r_3"]\\
& & & & \\
 X_1+X_3  \arrow[uu,bend left = 10,"r_4"] &  X_4 \arrow[l,"r_6"] \arrow[uu,bend left = 10,"r_5"] & & X_1+X_3  \arrow[r,"r_4"] &  X_4 \arrow[loop right,"r_5 \& r_6"]
\end{tikzcd}
\end{equation}
Note that we show the self loops in the slices \eqref{example222} for completeness but omit them in \eqref{example223} to avoid overcluttering the diagram.

It can be checked that the conditions of Definition \ref{def:splitting} are satisfied, and that the mass-action system \eqref{mas} corresponding to \eqref{example321} and generalized mass-action system \eqref{gmas} corresponding to \eqref{example223} are both given by \eqref{mas1} (i.e. Theorem \ref{dynamicalequivalence} is satisfied). In particular, we have that Condition (d) of Definition \ref{def:splitting} is satisfied because, even though reactions $r_3$ and $r_4$ are split in the split network translation \eqref{example223}, we have
\[y(\pi(3)) - y(\rho(3)) = \left( \begin{array}{c} 0 \\ -2 \\ 0 \\ 1 \end{array} \right) = \left( \begin{array}{c} 1 \\ -1 \\ 0 \\ 0 \end{array} \right) + \left( \begin{array}{c} -1 \\ -1 \\ 0 \\ 1 \end{array} \right) = \left( \tilde y(1) - \tilde y(2) \right) + \left(\tilde y(4) - \tilde y(2)\right)\]
and
\[y(\pi(4)) - y(\rho(4)) = \left( \begin{array}{c} 0 \\ 0 \\ -2 \\ 1 \end{array} \right) = \left( \begin{array}{c} 1 \\ 0 \\ -1 \\ 0 \end{array} \right) + \left( \begin{array}{c} -1 \\ 0 \\ -1 \\ 1 \end{array} \right) = \left( \tilde y(1) - \tilde y(3) \right) + \left(\tilde y(4) - \tilde y(3)\right).\]

Since \eqref{example223} is weakly reversible and has a stoichiometric and kinetic-order deficiency of zero ($\tilde \delta = 0$ and $\tilde \delta' = 0$), the methods of \cite{MR2014} and \cite{J-M-P} can be applied to obtain the steady state parametrization
\[
\begin{aligned}
x_1 & = 2 \kappa_3 \kappa_4(\kappa_5+\kappa_6)\tau\\
x_2 & = \kappa_4(2\kappa_1\kappa_5+\kappa_1\kappa_6+\kappa_2\kappa_5)\tau\\
x_3 & = 2\kappa_3\kappa_4(\kappa_1+\kappa_2)\tau\\
x_4 & = \kappa_3(\kappa_1\kappa_6+\kappa_2\kappa_5+2\kappa_2\kappa_6)\tau
\end{aligned}
\]
where $\tau > 0$. The computational method introduced in \cite{C-F-M-W2016} guarantees that the system is mono-stationary for all values of the rate constants $\kappa_i > 0$. That is, within each positive stoichiometric compatibility class there is exactly one steady state. \hfill $\square$

\end{exa}


\begin{exa}
Consider the following mechanism for the bifunction enzyme 6-phosphofructo-2-kinase/fructose-2,6-bisphosphatase (PFK-2/FBPase-2), which is simplified from that of the paper by Karp et al. \cite{Karp}:

\begin{equation}
\label{pfk}
\begin{tikzcd}
& X_2 \arrow[dd,"r_2"] & X_2 + X_4 \arrow[rd,bend left = 10,"r_{4}"] & & X_3 + X_5 & & \\
X_1 \arrow[ur,"r_1"] & & & X_6  \arrow[lu,bend left = 10,"r_{5}"] \arrow[ld,bend left = 10,"r_{7}"] \arrow[ru,"r_{8}"] \arrow[rd,"r_{9}"'] & & & \\
& X_3 \arrow[ul,"r_3"] & X_1 + X_5 \arrow[ru,bend left = 10,"r_{6}"] & & X_1 + X_4 & &
\end{tikzcd}
\end{equation}
where $X_1 = E$-$ATP$-$F6P$, $X_2 = E$-$ATP$-$F2,6BP$, $X_3 = E$-$F2,6BP$, $X_4 = F6P$, $X_5 = F2,6BP$, and $X_6 = E$-$ATP$-$F6P$-$F2,6BP$. Note that this network is not weakly reversible, and furthermore does not admit a weakly reversible network translation by the techniques outlined in \cite{J-M-P,Tonello2017,J-B2018}.

We therefore look for a split network translation (Definition \ref{def:splitting}) using the algorithm outlined in Section \ref{sec:implementation}. This procedure finds the following split network translation which has two slices:
\begin{equation} \label{example225}
\begin{tikzcd}
\mbox{\ovalbox{$\begin{array}{c} 1 \\ \end{array} \Bigg\lvert \begin{array}{c} 2X_1+X_4 \\ (X_1) \end{array}$}} \arrow[d,"r_1"] & \mbox{\ovalbox{$\begin{array}{c} 2 \\ \end{array} \Bigg\lvert \begin{array}{c} X_1+X_6 \\ (X_6) \end{array}$}}  \arrow[l,"r_5 \& r_9"'] \arrow[d,bend left = 10,"r_7 \& r_8"] \arrow[r,"r_5"] \arrow[dr,bend left=5,"r_8"] & \mbox{\ovalbox{$\begin{array}{c} 3 \\ \end{array} \Bigg\lvert \begin{array}{c} X_2 + X_6 \\ (X_2) \end{array}$}} \arrow[d,"r_2"]\\
\mbox{\ovalbox{$\begin{array}{c} 4 \\ \end{array} \Bigg\lvert \begin{array}{c} X_1+X_2+X_4 \\ (X_2+X_4)  \end{array}$}} \arrow[ur,"r_4"] & \mbox{\ovalbox{$\begin{array}{c} 5 \\ \end{array} \Bigg\lvert \begin{array}{c} 2X_1+X_5 \\ (X_1+X_5) \end{array}$}} \arrow[u,bend left = 10,"r_6"] & \mbox{\ovalbox{$\begin{array}{c} 6 \\ \end{array} \Bigg\lvert \begin{array}{c} X_3+X_6 \\ (X_3) \end{array}$}} \arrow[ul,bend left=5,"r_3"]
\end{tikzcd}
\end{equation}
Notice that the reactions $r_5$ and $r_8$ explicitly appear twice while the second copy of the remainder of the reactions correspond to self-loops and are not shown. To verify Condition (d) of Definition \ref{def:splitting}, we observe that, for $r_5$, we have
\[\footnotesize(\tilde y(1) - \tilde y(2)) + (\tilde y(3) - \tilde y(2)) = \left( \begin{array}{c} 1 \\ 0 \\ 0 \\ 1 \\ 0 \\ -1 \end{array} \right) + \left( \begin{array}{c} -1 \\ 1 \\ 0 \\ 0 \\ 0 \\ 0 \end{array} \right) = \left( \begin{array}{c} 0 \\ 1 \\ 0 \\ 1 \\ 0 \\ -1 \end{array} \right) = y(\pi(5)) - y(\rho(5))\]
and, for $r_8$, we have
\[\footnotesize(\tilde y(5) - \tilde y(2)) + (\tilde y(6) - \tilde y(2)) = \left( \begin{array}{c} 1 \\ 0 \\ 0 \\ 0 \\ 1 \\ -1 \end{array} \right) + \left( \begin{array}{c} -1 \\ 0 \\ 1 \\ 0 \\ 0 \\ 0 \end{array} \right) = \left( \begin{array}{c} 0 \\ 0 \\ 1 \\ 0 \\ 1 \\ -1 \end{array} \right) = y(\pi(8)) - y(\rho(8)).\]

This network \eqref{example225} has a stoichiometric deficiency of one ($\delta = 1$) and kinetic deficiency of zero ($\delta' = 0$). It follows by the Theorem 14 of \cite{J-M-P} and Theorem \ref{dynamicalequivalence} that the following monomial parametrization lies on the steady state set of the mass-action system corresponding to \eqref{pfk}:
\begin{equation}
\label{pfk-param}
\left\{\; \;
\begin{array}{rlrlrl}
x_1 & = \displaystyle{\frac{k_5+k_9}{k_1} \tau}, & x_3 & = \displaystyle{\frac{k_5+k_8}{k_3} \tau}, &
x_5 & = \displaystyle{\frac{k_1(k_7+k_8)}{k_6(k_5+k_9)}},\\
x_2 & = \displaystyle{\frac{k_5}{k_2} \tau}, & x_4 &  = \displaystyle{\frac{k_2(k_5+k_9)}{k_4k_5}}, & x_6&  = \tau,
\end{array}
\right.
\end{equation}
where $\tau > 0$ is a free parameter. It is worth noting that, since $\delta \not= 0$, the parametrization \eqref{pfk-param} does not represent the entire steady state set. In fact, \eqref{pfk-param} is only a subset of the full parametrization, which is given by:
\small
\[\left\{ \; \; \begin{array}{rlrlrl}
x_1 & = \displaystyle{\frac{k_2(k_5 + k_9) + k_4k_9\tau_2}{k_1(k_5 + k_9)}\tau_1},
& x_3 & = \displaystyle{\frac{k_2(k_5 + k_9) + k_4k_8\tau_2}{k_3(k_5 + k_9)}\tau_1},
& x_5 & = \displaystyle{\frac{k_1k_4(k_7 + k_8)}{k_6(k_2(k_5 + k_9) + k_4k_9\tau_2 )}\tau_2},\\
x_2 & = \tau_1,
& x_4 & = \tau_2,
& x_6 & = \displaystyle{\frac{k_4}{k_5 + k_9}\tau_1 \tau_2}
\end{array} \right.\]
\normalsize
where $\tau_1, \tau_2 > 0$ are free parameters.

\end{exa}

\section{Conclusions and Future Work}
\label{sec:conclusions}

In this paper, we have extended the framework of network translation to accommodating splitting of the reactions in a chemical reaction network. This expands the scope of networks for which the steady state set can be characterized by deficiency-based methods. We have also presented a computational program for finding split network translations which are weakly reversible.

This work raises several avenues for future computational work. 
In particular, the computational algorithm presented in Section \ref{sec:implementation} does not currently scale well to large networks, often taking several minutes to complete for networks with more than even ten reactions. This limits widespread application. Future work will focus on increasing the efficiency of the code, which would allow the theory of split network translation to be tested on public available biochemical reaction databases, for example, the European Bioinformatics' Institute's BioModels Database \cite{Biomodels}. Additionally, we will work toward combining the computational work of this paper and \cite{J2,Tonello2017,J-B2018} with the computational methods for building steady state parametrizations and establishing multistationarity in mass-action systems \cite{C-F-M-W2016}.




\appendix

\section{Appendix - Proof of Weak Reversibility Condition}
\label{appendixa}

We prove the following definition and result, which is heavily inspired by Section 3.1 of \cite{J-S4}, and justifies the constraint set \eqref{wr}.

\begin{dfn}
\label{dfn:se}
We say two matrices $A, B \in \mathbb{R}^{n \times m}$ are \emph{structurally equivalent} if (1) $A_{i,j} < 0 \Longleftrightarrow B_{i,j} < 0$, (2) $A_{i,j} = 0 \Longleftrightarrow B_{i,j} = 0$, and (3) $A_{i,j} > 0 \Longleftrightarrow B_{i,j} > 0$.
\end{dfn}

\begin{lem}
\label{lem:wr}
Consider a chemical reaction network $(G,y)$ with incidence matrix $A \in \{ -1, 0, 1 \}^{n \times r}$. Then $G = (V,E)$ is weakly reversible if and only if there is a matrix $B \in \mathbb{R}^{n \times r}$ which is structurally equivalent to $A$ which satisfies $B \cdot \mathbf{1} = \mathbf{0}$ where $\mathbf{1} = (1, \ldots, 1) \in \mathbb{R}^r$ and $\mathbf{0} = (0, \ldots, 0) \in \mathbb{R}^r$.
\end{lem}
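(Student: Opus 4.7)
The plan is to reduce the equivalence to the classical characterization of weak reversibility as the existence of a strictly positive vector in $\ker A$, and then realize such a positive kernel vector through the entries of $B$.

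For the forward direction, I would use the classical fact that in a weakly reversible directed graph every edge lies on a directed cycle. Hence there exist finitely many directed cycles $C_1,\dots,C_s$ of $G$ whose union covers $E$. Choosing any positive weights $\lambda_1,\dots,\lambda_s>0$ and setting $w_k=\sum_{i:k\in C_i}\lambda_i>0$, the matrix $B$ with columns $B_{\cdot,k}=w_k A_{\cdot,k}$ is structurally equivalent to $A$ (each column is obtained by positive rescaling) and satisfies $B\mathbf{1}=A\mathbf{w}=\sum_i\lambda_i A\mathbf{1}_{C_i}=0$, since the indicator vector of any directed cycle lies in $\ker A$.

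For the reverse direction, the plan is to extract a strictly positive weight vector $\mathbf{w}\in\mathbb{R}^r_{>0}$ with $A\mathbf{w}=0$ from the given $B$, and then invoke the standard flow-decomposition theorem (or equivalently Farkas's lemma applied to the alternative system $\{A\mathbf{w}=0,\ \mathbf{w}>0\}$): a strictly positive element of $\ker A$ yields a balanced positive edge-weighting on $G$, which decomposes into a nonnegative combination of directed cycles, so every edge of $G$ sits on a directed cycle and $G$ is weakly reversible. To set this up, I would write $B=B_t-B_s$ with $B_t,B_s\geq 0$ supported on the target and source positions of $A$ respectively; structural equivalence guarantees that each column of $B_s$ (resp.\ $B_t$) contains a unique strictly positive entry at the source vertex $\rho(k)$ (resp.\ target vertex $\pi(k)$), and the hypothesis $B\mathbf{1}=0$ becomes the per-vertex balance $B_t\mathbf{1}=B_s\mathbf{1}$, i.e.\ the sum of incoming ``target weights'' equals the sum of outgoing ``source weights'' at every vertex.

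The main obstacle will be this extraction step: structural equivalence only ties the sign pattern of $B$ to that of $A$ and does not, by itself, tie the two positive magnitudes appearing in a single column of $B$ to each other, so a direct read-off $w_k:=[B_t]_{\pi(k),k}$ need not lie in $\ker A$. The plan is to adapt the linear-programming framework of Section 3.1 of \cite{J-S4}: combine the componentwise positivity of $B_s$ and $B_t$ with the row-balance $B_t\mathbf{1}=B_s\mathbf{1}$ and bootstrap (e.g.\ by solving the auxiliary LP of finding $\mathbf{w}>0$ with $A\mathbf{w}=0$ and using the existence of $B$ to certify feasibility via LP duality) to produce a single coherent positive weight $w_k$ per edge satisfying $A\mathbf{w}=0$. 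Once $\mathbf{w}$ is in hand, the cycle decomposition of the resulting balanced positive flow delivers weak reversibility of $G$.
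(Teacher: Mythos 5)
Your forward direction coincides with the paper's own argument: cover $E$ by directed cycles, sum the cycle indicator vectors to obtain a strictly positive $\mathbf{1}^*\in\ker A$, and rescale the columns of $A$ by its entries. (The paper writes $B_{i,j}=A_{i,j}/\mathbf{1}^*_j$, which must be a typo for $B_{i,j}=A_{i,j}\cdot\mathbf{1}^*_j$; only the latter gives $B\cdot\mathbf{1}=A\cdot\mathbf{1}^*=\mathbf{0}$.) Note that the paper's proof stops after this direction and never addresses the converse, so your attempt at the ``if'' half goes beyond what is actually in the text.

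That is where the genuine gap lies, and the obstacle you flag for yourself is fatal rather than technical: structural equivalence constrains only the sign pattern of $B$, so the positive and negative entries in a single column may have unrelated magnitudes, and the existence of such a $B$ with zero row sums does \emph{not} certify feasibility of $\{A\mathbf{w}=\mathbf{0},\ \mathbf{w}>\mathbf{0}\}$. Concretely, take $V=\{1,2,3,4\}$ with edges $1\to 2$, $2\to 1$, $3\to 4$, $4\to 3$, $1\to 3$ (a valid reaction graph, e.g.\ $X_1\rightleftharpoons X_2$, $X_3\rightleftharpoons X_4$, $X_1\to X_3$). This network is not weakly reversible, since $1\to 3$ lies on no directed cycle, yet
\[
B=\begin{pmatrix} -1 & 2 & 0 & 0 & -1\\ 1 & -1 & 0 & 0 & 0\\ 0 & 0 & -2 & 1 & 1\\ 0 & 0 & 1 & -1 & 0\end{pmatrix}
\]
has the same sign pattern as the incidence matrix $A$ and satisfies $B\cdot\mathbf{1}=\mathbf{0}$. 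Your proposed LP-duality bootstrap therefore cannot produce a coherent positive $\mathbf{w}\in\ker A$ here; indeed $y=(-1,-1,0,0)$ gives $y^{\intercal}A=(0,0,0,0,1)\geq 0$ and nonzero, which by Stiemke's lemma certifies that no strictly positive kernel vector exists. The statement only becomes true (and both your cycle-covering argument and your flow-decomposition argument then work) if one additionally requires a single weight per edge, i.e.\ $B_{\cdot,k}=w_k A_{\cdot,k}$ with $w_k>0$, so that $B\cdot\mathbf{1}=A\mathbf{w}$ --- which is the form of the condition actually used in \cite{J-S4} via the $n\times n$ Kirchhoff matrix. As written, the lemma's ``if'' direction, the paper's one-directional proof, and your plan for the converse all founder on this same point.
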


\begin{proof}
We recall that a chemical reaction network $(G,y)$ is weakly reversible if and only if every reaction is the part of a cycle. This number of cycles is clearly finite so that is a finite set of vectors $\mathbf{1}^{(i)} \in \{ 0, 1 \}^r$, $i = 1, \ldots, p$, where $[\mathbf{1}^{(i)}]_j = 1$ if and only if $r_j$ is a part of the $i^{th}$ cycle, and $A \cdot \mathbf{1}^{(i)} = \mathbf{0}$ for all $i = 1, \ldots, p$. We define $\mathbf{1}^* = \sum_{i=1}^p \mathbf{1}^{(i)}$ and note that $A \cdot \mathbf{1}^* = \mathbf{0}$ and $\mathbf{1}^*_j > 0$ for all $j = 1, \ldots, r$ because every reaction is a part of at least one cycle. We now define the matrix $B \in \mathbb{R}^{n \times r}$ to have entries $B_{i,j} = A_{i,j} / \mathbf{1}^*_j$. Since $\mathbf{1}^*_j > 0$ for all $j = 1, \ldots, r$, it is clear that $B$ is structurally equivalent to $A$. Furthermore, we have that $B \cdot \mathbf{1} = \mathbf{0}$ where $\mathbf{1} = (1, \ldots, 1)$ so that the result is shown.
\end{proof}

\end{document}